 \newtheorem{thm}{Theorem}[section]
 \newtheorem{cor}[thm]{Corollary}
 \newtheorem{lem}[thm]{Lemma}
 \theoremstyle{definition}
 \theoremstyle{remark}
 \newtheorem{ex}[thm]{Example}
 \numberwithin{equation}{section}
\begin{document}

%
%
%
%
%
%
%
%
%

\author[Youssef Aserrar and Elhoucien Elqorachi]{Youssef Aserrar and Elhoucien Elqorachi}

\address{%
	Ibn Zohr University, Faculty of sciences, 
Department of mathematics,\\
Agadir,
Morocco}

\email{youssefaserrar05@gmail.com, elqorachi@hotmail.com }
\subjclass{39B52, 39B32}

\keywords{Semigroup, cosine addition law, involutive automorphism, multiplicative function.}

\date{January 1, 2004}
\title[A generalization of the cosine addition law on  semigroups]{A generalization of the cosine addition law on semigroups}
 
\begin{abstract}
Our main result is that we describe the solutions $g,f:S\rightarrow\mathbb{C}$ of the functional equation 
\[g(x\sigma(y))=g(x)g(y)-f(x)f(y)+\alpha f(x\sigma(y)),\quad x,y\in S,\]
where $S$ is a semigroup, $\alpha \in \mathbb{C}$ is a fixed constant and $\sigma :S\rightarrow S$ an involutive automorphism.
\end{abstract}

\maketitle

\section{Introduction}
The cosine addition formula, cosine subtraction formula, and sine addition formula on any semigroup $S$   for unknown functions $g,f:S\rightarrow\mathbb{C}$ are, respectively, the functional equations
\begin{equation}
g(x\sigma(y))=g(x)g(y)-f(x)f(y),
\label{cosa}
\end{equation}
\begin{equation}
g(x\sigma(y))=g(x)g(y)+f(x)f(y),
\label{AE}
\end{equation}
\begin{equation}
g(x\sigma(y))=g(x)f(y)+f(x)g(y),
\label{sina}
\end{equation}
for all $x,y\in S$, where $\sigma :S\rightarrow S$ is an involutive automorphism. That is $\sigma (xy)=\sigma(x)\sigma(y)$ and $\sigma(\sigma(x))=x$ for all $x,y\in S$. These functional equations have been investigated by many authors. In \cite{V} Vincze obtained the solutions of \eqref{AE} on abelian groups with $\sigma =id$, and it was solved on general groups by Chung, Kannappan, and Ng \cite{Ch}. The results were extended to the case of topological groups by Poulsen and Stetk\ae r \cite{Pou} and to semigroups generated by their squares by Ajebbar and Elqorachi \cite{Ajb} with $\sigma$ an involutive automorphism. Also Stetk\ae r \cite[Theorem 6.1]{ST2} gives a description of the solution of \eqref{cosa} with $\sigma =id$ on a general semigroup in terms of the solutions of \eqref{sina} with $\sigma=id$. The most recent result about \eqref{sina}  with $\sigma=id$ was obtained by Ebanks \cite[Theorem 2.1]{EB1} and \cite[Theorem 3.1]{EB2} on semigroups. Ebanks \cite[Theorem 4.1]{EB1} gives the solution of \eqref{AE} on monoids.  Recently the authors \cite{Ase} solved \eqref{cosa}, \eqref{AE} and \eqref{sina} on semigroups.\par 
Stetk\ae r \cite[Theorem 3.1]{ST2} solved the functional equation 
\begin{equation}
g(xy)=g(x)g(y)-f(x)f(y)+\alpha f(xy),\quad x,y\in S.
\label{stet}
\end{equation}
on a semigroup $S$, where $\alpha \in \mathbb{C}$ is a fixed constant. He expressed the solutions in terms of multiplitive functions on $S$ and solutions $h:S\rightarrow\mathbb{C}$ of the special case of the sine addition law 
\begin{equation}
h(xy)=h(x)\chi(y)+h(y)\chi(x),\quad x,y\in S,
\label{h}
\end{equation}
in which $\chi :S\rightarrow\mathbb{C}$ is a multiplicative function.
Equation \eqref{stet} generalizes both the cosine addition formula \eqref{cosa} and the cosine subtraction formula \eqref{AE} with $\sigma=id$.\par 
As a continuation and a generalization of these investigations we determine the complex valued solutions of the functional equation
\begin{equation}
g(x\sigma(y))=g(x)g(y)-f(x)f(y)+\alpha f(x\sigma(y)),\quad x,y\in S,
\label{Hen}
\end{equation}
on semigroups, where $\sigma :S\rightarrow S$ is an involutive automorphism. We obtain explicit formulas for the solutions expressed in terms of multiplicative, additive and sometimes arbitrary functions. The paper concludes with some examples.\par 
Our notation is decribed in the following set up.

\section{Set up and notation}
Throughout this paper $S$ denotes a semigroup.\\ 
A function $a: S\rightarrow \mathbb{C}$ is additive if 
$$a(xy) = a(x) + a(y) \quad \text{for all}\  x, y\in S.$$ 
A function $\chi: S\rightarrow \mathbb{C}$ is multiplicative if 
$$\chi(xy) = \chi(x)\chi(y)\quad\text{for all}\  x, y \in S.$$ 
A function $f: S\rightarrow \mathbb{C}$ is central if $f(xy) = f(yx)$ for all $x, y\in S$, and $f$ is abelian if $f$ is central and $f(xyz)=f(xzy)$ for all $x,y,z\in S$.\\
For any subset $T\subseteq S$ we define the set  $T^2:=\lbrace xy\ \vert \ x, y\in T\rbrace$, so $T^2$ consists of all products of two (or more) elements of $T$.\\ 
If $\chi: S \rightarrow \mathbb{C}$ is a non-zero multiplicative function, we define the sets $$I_{\chi}:=\{x \in S \mid \chi(x)=0\}$$ 
$$P_\chi : =\{p\in I_{\chi}\backslash I_{\chi}^2\  \vert up, pv, upv\in I_{\chi}\backslash I_{\chi}^2\  \text{for all}\  u,v\in S\backslash I_{\chi}\}.$$ 
 For any function $f: S \rightarrow \mathbb{C}$ we define the function
  $$f^{*}(x)=f(\sigma(x)), x \in S.$$
  We call $f^{e}:=\frac{f+f^{*}}{2}$ the even part of $f$ and $f^{\circ}:=\frac{f-f^{*}}{2}$ its odd part. The function $f$ is said to be even if $f=f^{*}$, and $f$ is said to be odd if $f=-f^{*}$.
  In the following lemma we give some properties of the set $P_\chi$ .
\begin{lem}
\begin{enumerate}
\item[(a)] If $u\in S\backslash I_\chi$ and $p\in P_\chi$, then $up,pu\in P_\chi$ .
\item[(b)] $\sigma(P_\chi)=P_{\chi^*}$. Note in particular that $\sigma(P_\chi)=P_\chi$ , if $\chi=\chi^*$.
\end{enumerate}
\end{lem}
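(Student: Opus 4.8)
The goal is to prove Lemma 2.1, which asserts two structural facts about the set $P_\chi$ attached to a non-zero multiplicative function $\chi$. Let me plan each part.

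For part (a), I want to show that multiplying an element $p\in P_\chi$ on either side by some $u\in S\backslash I_\chi$ keeps it inside $P_\chi$. First I would verify that $up\in I_\chi$: since $\chi(up)=\chi(u)\chi(p)=\chi(u)\cdot 0=0$, indeed $up\in I_\chi$. Next I must check $up\notin I_\chi^2$; this is the delicate point, and I expect it to be the main obstacle. The natural approach is by contradiction: if $up=ab$ with $a,b\in I_\chi$, I would try to leverage the hypothesis $p\in P_\chi$, which says that left-, right-, and two-sided products of $p$ with elements of $S\backslash I_\chi$ all avoid $I_\chi\backslash I_\chi^2$ but stay in $I_\chi\backslash I_\chi^2$ — so I need to exploit the precise defining condition $up,pv,upv\in I_\chi\backslash I_\chi^2$ to derive a contradiction from any factorization of $up$ through $I_\chi^2$. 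Finally, to confirm $up\in P_\chi$ itself, I would take arbitrary $s,t\in S\backslash I_\chi$ and verify that $s(up)=(su)p$, $(up)t=u(pt)$, and $s(up)t=(su)(pt)$ all lie in $I_\chi\backslash I_\chi^2$; here the key observation is that $su,pt,su\cdot(\,\cdot\,)$ remain in $S\backslash I_\chi$ because $\chi(su)=\chi(s)\chi(u)\neq 0$, so these reduce directly to the defining membership conditions for $p\in P_\chi$. The argument for $pu$ is entirely symmetric.

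For part (b), I want to show $\sigma(P_\chi)=P_{\chi^*}$. The engine here is that $\sigma$ is an involutive automorphism together with the definition $\chi^*=\chi\circ\sigma$. First I would record the elementary fact that $\sigma$ maps $I_\chi$ bijectively onto $I_{\chi^*}$: indeed $\chi^*(\sigma(x))=\chi(\sigma(\sigma(x)))=\chi(x)$, so $x\in I_\chi \iff \sigma(x)\in I_{\chi^*}$. Because $\sigma$ is a bijective homomorphism, it also carries $I_\chi^2$ onto $I_{\chi^*}^2$, hence $\sigma$ maps $I_\chi\backslash I_\chi^2$ onto $I_{\chi^*}\backslash I_{\chi^*}^2$, and likewise $\sigma(S\backslash I_\chi)=S\backslash I_{\chi^*}$. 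Then I would take $p\in P_\chi$ and check that $\sigma(p)$ satisfies the defining conditions for $P_{\chi^*}$: given $u,v\in S\backslash I_{\chi^*}$, write $u=\sigma(u')$, $v=\sigma(v')$ with $u',v'\in S\backslash I_\chi$, and observe that $u\sigma(p)=\sigma(u'p)$, $\sigma(p)v=\sigma(pv')$, and $u\sigma(p)v=\sigma(u'pv')$; applying the already-established fact that $\sigma$ preserves $I_\chi\backslash I_\chi^2\to I_{\chi^*}\backslash I_{\chi^*}^2$ converts the membership conditions for $p\in P_\chi$ into exactly those required for $\sigma(p)\in P_{\chi^*}$. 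This gives $\sigma(P_\chi)\subseteq P_{\chi^*}$, and the reverse inclusion follows by applying the same reasoning to $\sigma^{-1}=\sigma$ and $\chi=(\chi^*)^*$, yielding equality. The final remark, that $\sigma(P_\chi)=P_\chi$ when $\chi=\chi^*$, is then immediate since $P_{\chi^*}=P_\chi$ in that case.

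The routine verifications in both parts amount to pushing $\chi$ or $\chi^*$ through products and pushing $\sigma$ through products; the only genuinely substantive step is showing that the product $up$ (or $pu$) cannot fall into $I_\chi^2$ in part (a), which is where the full strength of the definition of $P_\chi$ — particularly the two-sided condition $upv\in I_\chi\backslash I_\chi^2$ — must be brought to bear.
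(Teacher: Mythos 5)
Your plan is correct and takes essentially the same route as the paper, whose proof is simply the observation that (a) follows directly from the definition of $P_\chi$ and (b) from the fact that $\sigma$ is a bijective (involutive) automorphism carrying $I_\chi$, $I_\chi^2$ and $S\setminus I_\chi$ onto $I_{\chi^*}$, $I_{\chi^*}^2$ and $S\setminus I_{\chi^*}$. One small remark: the step you single out as the ``delicate point'' in (a), namely $up\notin I_\chi^2$, is not an obstacle and needs no contradiction argument --- it is literally part of the defining condition $up\in I_\chi\setminus I_\chi^2$ for $p\in P_\chi$, and the rest of your verification (rewriting $s(up)t=(su)p\,t$ with $su\in S\setminus I_\chi$, etc.) is exactly the intended direct check.
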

\begin{proof}
(a) Follows directly from the definition of $P_\chi$.\\
(b) Easy to verify using the fact that $\sigma :S\rightarrow S$ is a bijection (See \cite[Lemma 4.1]{ES1}).
\end{proof}
  For a topological semigroup $S$ let $C(S)$ denote the algebra of continuous functions from $S$ into $\mathbb{C}$.
\section{ The main result}
The following lemmas will be used later. 
\begin{lem}
Let $f,g:S\rightarrow\mathbb{C}$ be a solution of the functional equation 
\begin{equation}
f(x\sigma(y))=\beta f(x)f(y)-\beta g(x)g(y),
\label{FF3}
\end{equation}
where $g$ a non-zero function such that $g=0$ on $S^2$ and $\beta \in \mathbb{C}\backslash \lbrace 0\rbrace$ is a constant. Then $f$ and $g$ are linearly dependent.
\label{lemme}
\end{lem}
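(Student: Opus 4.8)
The plan is to exploit associativity together with the hypothesis that $g=0$ on $S^2$ to force $g$ to be a scalar multiple of $f$.

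First, since $\sigma$ is an involutive automorphism, replacing $y$ by $\sigma(y)$ in \eqref{FF3} gives the companion identity
\[ f(xy)=\beta\big(f(x)f^{*}(y)-g(x)g^{*}(y)\big),\quad x,y\in S. \]
This supplies an expression for $f$ on products that I can feed back into the main computation.

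The heart of the argument is to evaluate the triple product $f(x\sigma(y)\sigma(z))$ in two ways. Using that $\sigma$ is an automorphism, $\sigma(y)\sigma(z)=\sigma(yz)$, so grouping as $x\cdot\sigma(yz)$ and applying \eqref{FF3} yields $\beta f(x)f(yz)-\beta g(x)g(yz)$; since $yz\in S^2$ and $g$ vanishes on $S^2$, the last term drops and we are left with $\beta f(x)f(yz)$. Grouping instead as $(x\sigma(y))\cdot\sigma(z)$ and applying \eqref{FF3}, the factor $g(x\sigma(y))$ again vanishes because $x\sigma(y)\in S^2$, leaving $\beta f(x\sigma(y))f(z)=\beta^{2}\big(f(x)f(y)-g(x)g(y)\big)f(z)$. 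Equating the two expressions and cancelling $\beta$ gives
\[ f(x)f(yz)=\beta\big(f(x)f(y)-g(x)g(y)\big)f(z). \]
Now I substitute the companion identity $f(yz)=\beta\big(f(y)f^{*}(z)-g(y)g^{*}(z)\big)$ into the left side and cancel $\beta$; after expanding and rearranging, the purely-$f$ terms collect to give
\[ f(x)f(y)\big(f^{*}(z)-f(z)\big)=g(y)\big(f(x)g^{*}(z)-g(x)f(z)\big). \]

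To finish, I observe first that $f\not\equiv 0$: if $f$ vanished identically, then \eqref{FF3} would force $g(x)g(y)=0$ for all $x,y$, hence $g\equiv 0$, contradicting the hypothesis. Thus I may fix $y_{0}$ with $g(y_{0})\neq 0$ and $z_{0}$ with $f(z_{0})\neq 0$. Specialising the last displayed identity to $y=y_{0}$, $z=z_{0}$ and solving for $g(x)$ expresses $g(x)$ as $f(x)$ times a fixed constant $\lambda\in\mathbb{C}$, so $g=\lambda f$, which proves that $f$ and $g$ are linearly dependent.

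The main obstacle is the bookkeeping in the triple-product step: one must check that both regroupings genuinely produce a factor lying in $S^2$ (namely $yz$ and $x\sigma(y)$), so that the cross terms $g(x)g(yz)$ and $g(x\sigma(y))g(z)$ really vanish. Everything after that is routine algebra, apart from the small degenerate check that $f$ is not identically zero.
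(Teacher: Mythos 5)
Your proof is correct and follows essentially the same route as the paper: both arguments evaluate a triple product in two ways, use $g=0$ on $S^2$ to kill the cross terms, and then specialize at points $y_0$ with $g(y_0)\neq 0$ and $z_0$ with $f(z_0)\neq 0$ to express $g$ as a constant multiple of $f$. The only cosmetic difference is that the paper packages the intermediate identity as $f(xy)=f(x)\phi(\sigma(y))$ and substitutes it back into \eqref{FF3}, whereas you substitute the companion identity for $f(yz)$ directly; both steps are equivalent.
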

\begin{proof}
If $f=0$ or $g=0$ then $f$ and $g$ are linearly dependent, so we may assume that $f\neq 0$ and $g\neq 0$. By applying \eqref{FF3} to the pair $(xy,z)$ and taking into account that $g=0$ on $S^2$ we obtain 
\begin{equation}
f(xy\sigma(z))=\beta f(xy)f(z),\ \text{for all}\ x,y,z\in S.
\label{FF4}
\end{equation}
Now if we apply \eqref{FF3} to the pair $(x,\sigma(y)z)$ and taking into account that $g=0$ on $S^2$ we get
\begin{equation}
f(xy\sigma(z))=\beta f(x)f(\sigma(y)z),\ \text{for all}\ x,y,z\in S.
\label{FF5}
\end{equation}
So we deduce from \eqref{FF4} and \eqref{FF5} since $\beta \neq 0$ that 
\[f(xy)f(z)=f(x)f(\sigma(y)z),\ \text{for all}\ x,y,z\in S.\]
Then we get since $f\neq 0$ that $f(xy)=f(x)\phi(\sigma(y))$ for some function $\phi :S\rightarrow\mathbb{C}$. Substituting this into \eqref{FF3} we find that
\[f(x)\phi(y)=\beta f(x)f(y)-\beta g(x)g(y).\]
This implies that \[\left[\beta f(y)-\phi (y) \right]f=\beta g(y)g. \]
Choosing $y_0\in S$ such that $g(y_0)\neq 0$ we see that $f$ and $g$ are linearly dependent. This completes the proof of Lemma \ref{lemme}.
\end{proof}

\begin{lem}
Let $g,f :S\rightarrow\mathbb{C}$ be  two functions such that 
\[f=a_1\chi_1+a_2\chi_2\quad\text{and}\quad g=b_1\chi_1+b_2\chi_2,\]
where $\chi_1,\chi_2:S\rightarrow\mathbb{C}$ are two different non-zero multiplicative functions, $g\neq 0$ and $a_1,a_2\in \mathbb{C}\backslash \lbrace 0\rbrace$, $b_1,b_2\in \mathbb{C}$ are constants. We have 
\begin{enumerate}
\item[(1)] If $f$ is even and $g$ is odd, then $a_1=a_2$ and $b_1+b_2=0$.
\item[(2)] If $f$ is odd and $g$ is even, then $a_1+a_2=0$ and $b_1=b_2$.
\label{prob}
\end{enumerate}
\end{lem}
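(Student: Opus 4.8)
The plan is to lean on two standard facts. First, each $\chi_i^{*}$ is again multiplicative, since $\sigma$ is an automorphism: $\chi_i^{*}(xy)=\chi_i(\sigma(x)\sigma(y))=\chi_i^{*}(x)\chi_i^{*}(y)$. Second, distinct non-zero multiplicative functions on $S$ are linearly independent. Since the two statements are mirror images of each other, I would prove (1) in full and then obtain (2) by interchanging the even/odd hypotheses, taking care at each step to invoke whichever of $g\neq0$ or (equivalently) $f\neq0$ is available. Note also that $f\neq0$ holds automatically here, because $f=a_1\chi_1+a_2\chi_2$ with $a_1,a_2\neq0$ and $\chi_1\neq\chi_2$ is a nontrivial combination of independent functions.

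For part (1), I start from the evenness of $f$, which reads $a_1\chi_1+a_2\chi_2=a_1\chi_1^{*}+a_2\chi_2^{*}$, i.e. $a_1\chi_1+a_2\chi_2-a_1\chi_1^{*}-a_2\chi_2^{*}=0$. The crucial first step is to show the supports agree, namely $\{\chi_1,\chi_2\}=\{\chi_1^{*},\chi_2^{*}\}$. Suppose $\chi_1\notin\{\chi_1^{*},\chi_2^{*}\}$; since also $\chi_1\neq\chi_2$, the function $\chi_1$ appears in the above relation only through the term $a_1\chi_1$, so by linear independence its coefficient must vanish, giving $a_1=0$, contrary to $a_1\neq0$. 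Hence $\chi_1\in\{\chi_1^{*},\chi_2^{*}\}$, and likewise $\chi_2\in\{\chi_1^{*},\chi_2^{*}\}$. Because $\chi_1\neq\chi_2$ and $\chi_1^{*}\neq\chi_2^{*}$ (the map $f\mapsto f^{*}$ is injective), the inclusion forces $\{\chi_1,\chi_2\}=\{\chi_1^{*},\chi_2^{*}\}$.

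Only two configurations then survive: either $\chi_1^{*}=\chi_1,\ \chi_2^{*}=\chi_2$, or $\chi_1^{*}=\chi_2,\ \chi_2^{*}=\chi_1$. The first is ruled out by the hypotheses on $g$: if both $\chi_i$ were even then $g=b_1\chi_1+b_2\chi_2$ would satisfy $g=g^{*}$, so the odd function $g$ would also be even, whence $g=-g$ and $g=0$, contradicting $g\neq0$. Thus the swap configuration holds. Substituting $\chi_1^{*}=\chi_2,\ \chi_2^{*}=\chi_1$ into $f=f^{*}$ yields $a_1\chi_1+a_2\chi_2=a_2\chi_1+a_1\chi_2$, so $a_1=a_2$ by independence; substituting the same into $g=-g^{*}$ yields $b_1\chi_1+b_2\chi_2=-b_2\chi_1-b_1\chi_2$, so $b_1+b_2=0$. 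This proves (1).

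For part (2), with $f$ odd and $g$ even, I would run the identical support argument starting from $f=-f^{*}$, which again has the nonzero coefficients $a_1,a_2$ and therefore still forces $\{\chi_1,\chi_2\}=\{\chi_1^{*},\chi_2^{*}\}$. This time the fixed configuration $\chi_i^{*}=\chi_i$ is excluded by $f\neq0$ (it would make the odd $f$ also even, hence zero), and the swap configuration gives $a_1+a_2=0$ from $f=-f^{*}$ and $b_1=b_2$ from $g=g^{*}$. The one place demanding attention throughout is the support-equality step together with the correct exclusion of the fixed configuration: one must always extract the support identity from the function carrying the guaranteed nonzero coefficients ($f$), and then discard the all-even/all-fixed case using the appropriate nonvanishing hypothesis. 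Everything after that is a two-line linear-algebra computation.
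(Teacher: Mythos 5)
Your argument is correct and follows essentially the same route as the paper: both proofs use linear independence of distinct non-zero multiplicative functions to force $\{\chi_1,\chi_2\}=\{\chi_1^*,\chi_2^*\}$, rule out the fixed configuration $\chi_i^*=\chi_i$ via the non-vanishing of the odd function ($g\neq 0$ in case (1), $f\neq 0$ in case (2)), and read off the coefficient relations from the swap configuration. The only difference is cosmetic: the paper delegates the term-matching to cited results (Proposition A.2 of Stetk\ae r's paper and Theorem 3.18 of his book), whereas you carry out the independence argument by hand.
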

\begin{proof}

(1) If $f$ is even and $g$ is odd, we get that 
\begin{equation}
a_1\chi_1+a_2\chi_2=a_1\chi_1^*+a_2\chi_2^*,
\label{eq1}
\end{equation}
and 
\begin{equation}
b_1\chi_1+b_2\chi_2=-b_1\chi_1^*-b_2\chi_2^*.
\label{eq2}
\end{equation}
By the help of \cite[Proposition A.2]{ST2} we deduce from \eqref{eq1} that $a_1\chi_1=a_1\chi_1^*$ or $a_1\chi_1=a_2\chi_2^*$. If $a_1\chi_1=a_1\chi_1^*$ then $\chi_1=\chi_1^*$ since $a_1\neq 0$, and then we get from \eqref{eq1} that $\chi_2=\chi_2^*$ since $a_2\neq 0$. In view of \eqref{eq2} we deduce that $g=0$. This contradicts the fact that $g\neq 0$. So $a_1\chi_1=a_2\chi_2^*$ then since $\chi_1,\chi_2$ are non-zero, $a_1\neq 0$ and $a_2\neq 0$, according to \cite[Theorem 3.18 (a)]{ST1} we deduce that $\chi_1=\chi_2^*$ and then  $a_1=a_2$. Now \eqref{eq2} becomes $(b_1+b_2)(\chi_1+\chi_2)=0$ which implies that $b_1+b_2=0$. This is case (1).\\
(2) We proceed similarly to the case (1) to get the result. This completes the proof of Lemma \ref{prob}.
\end{proof}
In the following lemma we give some key properties of the solutions of Equation \eqref{Hen}.
\begin{lem}
Let $g,f :S\rightarrow\mathbb{C}$ be a solution of the functional equation \eqref{Hen}, and define the function $G:=g-\alpha f$. The following statements hold:
\begin{enumerate}
\item[(1)] $G(x\sigma(y))=G(y\sigma(x))$ for all $x,y\in S$.
\item[(2)] $G(xyz)=G^*(xyz)$ for all $x,y,z\in S$.
\item[(3)] For all $x,y,z\in S$
\begin{equation}
g^e(x)g^{\circ}(yz)=f^e(x)f^{\circ}(yz),
\label{L1}
\end{equation}
\begin{equation}
g^e(yz)g^{\circ}(x)=f^e(yz)f^{\circ}(x).
\label{L2}
\end{equation}
\end{enumerate}
\label{Lem1}
\end{lem}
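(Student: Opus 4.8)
The plan is to first rewrite \eqref{Hen} in terms of $G=g-\alpha f$. Moving the term $\alpha f(x\sigma(y))$ to the left and recognizing $G$, one obtains the basic identity $G(x\sigma(y))=g(x)g(y)-f(x)f(y)$ for all $x,y\in S$; replacing $y$ by $\sigma(y)$ and using $\sigma\circ\sigma=\mathrm{id}$ also gives the product form $G(xy)=g(x)g^*(y)-f(x)f^*(y)$. Part (1) is then immediate, since the right-hand side $g(x)g(y)-f(x)f(y)$ is symmetric in $x$ and $y$, whence $G(x\sigma(y))=G(y\sigma(x))$.

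For part (2) I would exploit part (1) together with associativity and the homomorphism property of $\sigma$. Writing $xyz=(xy)\sigma(\sigma(z))$ and applying part (1) yields $G(xyz)=G(\sigma(z)\sigma(xy))=G^*(zxy)$. Feeding the triple $(\sigma(x),\sigma(y),\sigma(z))$ into this same relation and using $\sigma(abc)=\sigma(a)\sigma(b)\sigma(c)$ produces the companion identity $G^*(xyz)=G(zxy)$. Combining the two gives $G(xyz)=G(yzx)$, that is, $G$ is invariant under cyclic permutations of triple products; since $zxy$ and $xyz$ are cyclic shifts of one another, $G^*(xyz)=G(zxy)=G(xyz)$, which is exactly part (2). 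I expect this to be the main obstacle: the symmetry furnished by part (1) only interchanges two factors, and promoting it to full $\sigma$-invariance on $S^3$ requires the careful bookkeeping of how $\sigma$ commutes past products in both identities above.

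For part (3) I would use part (2). From the product form above, $G(x\cdot yz)=g(x)g^*(yz)-f(x)f^*(yz)$, while applying $\sigma$ to that identity gives $G^*(x\cdot yz)=g^*(x)g(yz)-f^*(x)f(yz)$; equating the two via part (2) yields $g(x)g^*(yz)-f(x)f^*(yz)=g^*(x)g(yz)-f^*(x)f(yz)$. Expanding each factor into its even and odd parts and cancelling the symmetric terms reduces this to the single relation $g^\circ(x)g^e(yz)-f^\circ(x)f^e(yz)=g^e(x)g^\circ(yz)-f^e(x)f^\circ(yz)$. To split this into the two desired identities \eqref{L1} and \eqref{L2}, I would replace $x$ by $\sigma(x)$: the left-hand side is odd in $x$ and flips sign, while the right-hand side is even in $x$ and is unchanged. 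Adding and subtracting the two resulting equations then isolates $g^e(x)g^\circ(yz)=f^e(x)f^\circ(yz)$ and $g^\circ(x)g^e(yz)=f^\circ(x)f^e(yz)$, which are precisely \eqref{L1} and \eqref{L2}.
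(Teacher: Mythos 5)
Your proposal is correct and follows essentially the same route as the paper: rewrite \eqref{Hen} as $G(x\sigma(y))=g(x)g(y)-f(x)f(y)$, exploit the symmetry for (1), chain the resulting identity $G(xy)=G^*(yx)$ through triple products for (2), and for (3) compare $G(x\cdot yz)$ with $G^*(x\cdot yz)$ and separate the resulting relation by parity in $x$. The only differences are cosmetic bookkeeping (you manipulate triples directly where the paper iterates the two-variable identity, and you make the odd/even splitting explicit by substituting $\sigma(x)$).
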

\begin{proof}
(1) The functional equation \eqref{Hen} is equivalent to 
\begin{equation}
G(x\sigma(y))=g(x)g(y)-f(x)f(y)\quad \text{for all}\quad x,y\in S.
\label{p1}
\end{equation}
The right hand side of \eqref{p1} is invariant
under the interchange of $x$ and $y$. That is 
\begin{equation}
G(x\sigma(y))=G(y\sigma(x))\quad \text{for all}\quad x,y\in S.
\label{rat1}
\end{equation}
(2) Replacing $y$ by $\sigma(y)$ in \eqref{rat1}, we get $G(xy)=G^*(yx)$ for all $x,y\in S$, and then  we deduce that 
\[G(xyz)=G^*(zxy)=G(yzx)=G^*(xyz)\quad \text{for all}\quad x,y,z\in S.\]
(3) By applying the identity \eqref{p1} to the pair $(x,\sigma(yz))$ we obtain
\begin{equation}
G(xyz)=g(x)g^*(yz)-f(x)f^*(yz)\quad \text{for all}\quad x,y,z\in S.
\label{p2}
\end{equation}
Then by using (2), we deduce from \eqref{p2} that 
\begin{equation}
g(x)g^*(yz)-f(x)f^*(yz)=g^*(x)g(yz)-f^*(x)f(yz)\quad \text{for all}\quad x,y,z\in S.
\label{p3}
\end{equation}
Since $k=k^e+k^{\circ}$ and $k^*=k^e-k^{\circ}$ for any function $k:S\rightarrow\mathbb{C}$, then we get from \eqref{p3} after some rearrangement that for all $x,y,z\in S$
\begin{equation}
g^{\circ}(x)g^e(yz)-f^{\circ}(x)f^e(yz)=g^e(x)g^{\circ}(yz)-f^e(x)f^{\circ}(yz).
\label{p4}
\end{equation}
In the identity \eqref{p4} the left hand side is an odd function of $x$ while the right is an even function of $x$, then 
\begin{equation}
g^{\circ}(x)g^e(yz)-f^{\circ}(x)f^e(yz)=0,
\end{equation}
and 
\begin{equation}
g^e(x)g^{\circ}(yz)-f^e(x)f^{\circ}(yz)=0,
\end{equation}
 for all $x,y,z\in S$. This completes the proof of Lemma \ref{Lem1}.
\end{proof}
Now we present the general solution of \eqref{Hen} on semigroups. Stetk\ae r \cite[Theorem 3.1]{ST2} is Theorem \ref{thm1} with $\sigma =id$.
\begin{thm}
The solutions $g,f:S\rightarrow\mathbb{C}$ of the functional equation \eqref{Hen} are the following families:
\begin{enumerate}
\item[(1)] $\alpha = \pm 1$, $f$ is any non-zero function and $g=\alpha f$.
\item[(2)]$\alpha \neq 1$, $f=g\neq 0$ and $g=0$ on $S^2$.
\item[(3)] $\alpha \neq -1$, $f=-g\neq 0$ and $g=0$ on $S^2$.
\item[(4)] $f=(q+\alpha)\dfrac{\chi}{2}$ and $g=\left(1\pm \sqrt{1+q^2-\alpha ^2} \right)\dfrac{\chi}{2} $, where $q\in \mathbb{C}$ is a constant and $\chi :S\rightarrow \mathbb{C}$ a non-zero even multiplicative function.
\item[(5)] $f=\alpha \dfrac{\chi_1+\chi_2}{2}+q\dfrac{\chi_1-\chi_2}{2}$ and $g=\dfrac{\chi_1+\chi_2}{2}\pm \sqrt{1+q^2-\alpha^2}\dfrac{\chi_1-\chi_2}{2}$, where $\chi_1,\chi_2:S\rightarrow\mathbb{C}$ are two different non-zero even multiplicative functions and $q\in \mathbb{C}\backslash \lbrace \pm \alpha\rbrace$ is a constant.
\item[(6)] $\alpha \neq 0$,  $f=\alpha \chi_1$ and $g=\chi_2$, where $\chi_1,\chi_2:S\rightarrow\mathbb{C}$ are two different non-zero even multiplicative functions.
\item[(7)] $f=\alpha \chi+h$ and $g=\chi\pm h$, where $\chi:S\rightarrow\mathbb{C}$ is a non-zero even multiplicative function and $h:S\rightarrow\mathbb{C}$ is an even solution of the special sine addition law \eqref{h}.
\item[(8)] $\alpha\neq \pm 1$, $f=\dfrac{1+\alpha}{2}\chi-\dfrac{1-\alpha}{2}\chi^*$ and $g=\dfrac{1+\alpha}{2}\chi+\dfrac{1-\alpha}{2}\chi^*$, where $\chi:S\rightarrow\mathbb{C}$ is a multiplicative function such that $\chi\neq \chi^*$.\\
Note that, off the exceptional case (1), $g$ and $f$ are abelian.\par 
Moreover, if $S$ is a topological semigroup,  $f\in C(S)$ such that $f\neq \alpha \chi$ for any multiplicative function $\chi\in C(S)$, then $g\in C(S)$.
\end{enumerate}
\label{thm1}
\end{thm}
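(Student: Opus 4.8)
The plan is to base everything on the reformulation recorded in Lemma \ref{Lem1}: setting $G:=g-\alpha f$, equation \eqref{Hen} becomes the cosine-addition-type identity
\begin{equation*}
G(x\sigma(y))=g(x)g(y)-f(x)f(y),\qquad x,y\in S,
\end{equation*}
so that the entire argument can be run on this cleaner relation together with the even/odd identities \eqref{L1} and \eqref{L2}. I would then organize the proof around the dichotomy: whether $f$ and $g$ are linearly dependent.

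In the dependent case I would write $g=\lambda f$ and substitute, obtaining $(\lambda-\alpha)f(x\sigma(y))=(\lambda^{2}-1)f(x)f(y)$. A short discussion of the scalar $\lambda$ then settles several families at once: $\lambda=\alpha$ forces $\alpha=\pm1$ with $f$ arbitrary (family (1)); $\lambda=\pm1\neq\alpha$ forces $f$ to vanish on $S^{2}$ (families (2) and (3)); and $\lambda\notin\{\pm1,\alpha\}$ makes a scalar multiple of $f$ an even multiplicative function, which is the rank-one instance of family (4). The same circle of ideas disposes of the subcase where $g$ itself vanishes on $S^{2}$: there (for $\alpha\neq0$) the reformulated equation takes the shape of \eqref{FF3}, so Lemma \ref{lemme} forces linear dependence and we fall back on the list just obtained (the case $\alpha=0$ being handled directly). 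Consequently, in the genuinely new regime I may assume $f,g$ linearly independent and $g\not\equiv0$ on $S^{2}$.

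For that main regime the strategy is to isolate the even part first. Adding the reformulated equation to its $\sigma$-conjugate (replace $x,y$ by $\sigma(x),\sigma(y)$) yields an identity for $G^{e}(x\sigma(y))$ in terms of the even--even and odd--odd products of $g$ and $f$, while subtracting them yields a sine-addition-type identity for $G^{\circ}$. Feeding \eqref{L1} and \eqref{L2} into these identities kills the mixed terms whenever an argument lies in $S^{2}$ and makes $G^{\circ}$ vanish on deep products; once the residual odd--odd product term is shown to cancel, $(g^{e},f^{e})$ satisfies an equation of the form \eqref{stet} with $\sigma=\mathrm{id}$. Stetk\ae r's \cite[Theorem 3.1]{ST2} then supplies the even backbone $\chi$ (or $\chi_{1},\chi_{2}$) and, where relevant, the solution $h$ of the sine addition law \eqref{h}, accounting for the even skeleton of families (4)--(7).

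The remaining and hardest step is to reconstruct the odd parts and to prove that only the listed combinations survive. Here I would recover $g^{\circ},f^{\circ}$ from the even data through \eqref{L1}, \eqref{L2} and the difference identity, and invoke Lemma \ref{prob} to force the coefficient relations (such as $a_{1}=a_{2}$, $b_{1}+b_{2}=0$) that appear in families (5) and (8); the genuinely non-even solutions, where $\chi\neq\chi^{*}$, emerge precisely as family (8). The main obstacle is exactly this control of the odd part, because \eqref{L1} and \eqref{L2} only govern products with one factor in $S^{2}$: one must bootstrap from $S^{2}$ to all of $S$, track the freedom off $S^{2}$ (the source of the arbitrary datum $h$ and of the behaviour on the sets $P_{\chi}$), check that no spurious family is produced, and verify that the square-root constants $\pm\sqrt{1+q^{2}-\alpha^{2}}$ are pinned down by the quadratic constraint coming from the even equation. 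For the continuity addendum I would then check case by case that each formula expresses $g$ through $f$ and the (necessarily continuous) multiplicative and sine data, the hypothesis $f\neq\alpha\chi$ being exactly what excludes the degenerate family in which continuity of $g$ could fail.
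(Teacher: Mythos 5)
Your overall architecture (the reformulation $G=g-\alpha f$ from Lemma \ref{Lem1}, the dependent/independent dichotomy, the use of Lemma \ref{lemme} when $g$ vanishes on $S^{2}$, and the reduction of the even configurations to Stetk\ae r's \cite[Theorem 3.1]{ST2}) matches the paper's proof in spirit. But there is a genuine gap at exactly the point you flag as ``the remaining and hardest step,'' and it is not a gap that your stated tools can close. Adding the reformulated equation to its $\sigma$-conjugate gives
\begin{equation*}
G^{e}(x\sigma(y))=g^{e}(x)g^{e}(y)+g^{\circ}(x)g^{\circ}(y)-f^{e}(x)f^{e}(y)-f^{\circ}(x)f^{\circ}(y),
\end{equation*}
and the odd--odd block $g^{\circ}(x)g^{\circ}(y)-f^{\circ}(x)f^{\circ}(y)$ cancels only when $g^{\circ}=\pm f^{\circ}$ (or both vanish). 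Family (8) is precisely the situation where it does \emph{not} cancel: there $g^{\circ}=\alpha f^{\circ}$ with $\alpha\neq\pm1$, so $(g^{e},f^{e})$ does not satisfy an equation of the form \eqref{stet}, and your plan to extract the even backbone from \cite[Theorem 3.1]{ST2} breaks down in the one case that produces the genuinely non-even solutions. Your proposed fallback, Lemma \ref{prob}, cannot rescue this: that lemma presupposes that $f$ and $g$ are already known to be linear combinations of two multiplicative functions, which is exactly what must be established first in the mixed case, so the argument is circular.

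What the paper does instead in this case (its Subcase D.4.2.2) is derive from \eqref{L1} and \eqref{L2} the proportionalities $g^{e}=\delta f^{e}$ and $g^{\circ}=\delta^{-1}f^{\circ}$, substitute them back to obtain the symmetrized relation $f^{e}(xy)+f^{e}(\sigma(y)x)=\tfrac{2(\delta^{2}-1)}{\delta-\alpha}f^{e}(x)f^{e}(y)$, and solve this by Stetk\ae r's variant of d'Alembert's equation \cite[Theorem 2.1]{STd} to get $f^{e}$ proportional to $\chi+\chi^{*}$; only then do $f^{\circ}$, $g$, and the constraint $\delta\alpha=1$ follow. Neither the proportionality step nor the d'Alembert variant appears in your proposal, and without some equivalent of them family (8) is not reached. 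Separately, your plan is too coarse about the degenerate configurations: the paper needs several further subcases (its B.1, C.1, D.1--D.3, D.4.2.1), ruled out via Lemma \ref{lemme} and the classification of \eqref{AE}, to show that no spurious families with $f^{\circ}$ or $g^{\circ}$ vanishing only on $S^{2}$ survive; saying one must ``bootstrap from $S^{2}$ to all of $S$'' names the difficulty without resolving it. The continuity remark at the end is fine and matches the paper's appeal to \cite[Proposition 5.1]{ST2}.
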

\begin{proof}
It is easy to check that each of the pairs described in Theorem \ref{thm1} is a solution of \eqref{Hen}. So assume that the pair $(g,f)$ is a solution of equation \eqref{Hen}. If $g=0$, then \eqref{Hen} can be written as
\[\alpha f(x\sigma(y))=f(x)f(y),\ x,y\in S.\]
If $\alpha =0$ then $f=0$. This occurs in case (4) with $q=-\alpha$. Now if $\alpha \neq 0$, then
\[f(x\sigma(y))=\dfrac{1}{\alpha}f(x)f(y),\ x,y\in S.\]
Then according to \cite[Lemma 4.1]{Ase}, $f=\alpha \chi$, where $\chi :S\rightarrow\mathbb{C}$ is an even multiplicative function. This occurs in case (4) with $q=\alpha$. From now on we assume that $g\neq 0$ and we discuss two cases according to wether $g$ and $f$ are linearly dependent or not.\\
\underline{First case:} $g$ and $f$ are linearly dependent. There exists a constant $c\in \mathbb{C}$ such that $f=cg$. So \eqref{Hen} becomes
\[(1-\alpha c)g(x\sigma(y))=(1-c^2)g(x)g(y),\ \text{for all}\ x,y\in S.\] 
Then we obtain by proceeding exactly as in the proof of \cite[Lemma 4.3]{ST2} and using \cite[Lemma 4.1]{Ase} the cases (1), (2), (3) and (4).\\
\underline{Second case :} $g$ and $f$ are linearly independent.\\
\underline{Subcase A :} $g^{\circ}=0$ and $f^{\circ}=0$. That is $g$ and $f$ are even, so if we replace $y$ by $\sigma(y)$, the functional equation \eqref{Hen} can be written as follows
\[g(xy)=g(x)g(y)-f(x)f(y)+\alpha f(xy)\quad x,y\in S.\]
According to \cite[Theorem 3.1]{ST2} and taking into account that $f$ and $g$ are linearly independent and $g\neq 0$ we have the following possibilities :\\
(i) $f=\alpha \dfrac{\chi_1+\chi_2}{2}+q\dfrac{\chi_1-\chi_2}{2}$ and $g=\dfrac{\chi_1+\chi_2}{2}\pm \sqrt{1+q^2-\alpha^2}\dfrac{\chi_1-\chi_2}{2}$, where $\chi_1,\chi_2:S\rightarrow\mathbb{C}$ are two different non-zero multiplicative functions and $q\in \mathbb{C}\backslash \lbrace \pm \alpha\rbrace$ is a constant. So 
\[f-\alpha g=\dfrac{q\pm \alpha \sqrt{1+q^2-\alpha^2}}{2}\left(\chi_1-\chi_2 \right). \]
Since $f$ and $g$ are even, we see that
\begin{equation}
\dfrac{q\pm \alpha \sqrt{1+q^2-\alpha^2}}{2}\left(\chi_1-\chi_2 \right)=\dfrac{q\pm \alpha \sqrt{1+q^2-\alpha^2}}{2}\left(\chi_1^*-\chi_2^* \right).
\label{par1}
\end{equation} 
On the other hand $g$ and $f$ are linearly independent implies that $f-\alpha g\neq 0$, and then $\dfrac{q\pm \alpha \sqrt{1+q^2-\alpha^2}}{2}\neq 0$. So \eqref{par1} reduces to
\[\chi_1-\chi_2=\chi_1^*-\chi_2^*.\]
Then since $g$ is even, we get that 
\[\chi_1+\chi_2=\chi_1^*+\chi_2^*.\]
So, we deduce from the last two identities that $\chi_1=\chi_1^*$ and $\chi_2=\chi_2^*$. This occurs in case (5).\\
(ii) $\alpha \neq 0$,  $f=\alpha \chi_1$ and $g=\chi_2$, where $\chi_1,\chi_2:S\rightarrow\mathbb{C}$ are two different non-zero multiplicative functions. In addition since $f$ and $g$ are even, we deduce that $\chi_1^*=\chi_1$ and $\chi_2^*=\chi_2$. This is case (6).\\
(iii) $f=\alpha \chi+h$ and $g=\chi\pm h$, where $h,\chi:S\rightarrow\mathbb{C}$ is a solution of the sine addition law 
\[h(xy)=h(x)\chi(y)+h(y)\chi(x),\quad x,y\in S,\]
such that $\chi\neq 0$ is multiplicative and $h\neq 0$. So $f-\alpha g=(1\pm\alpha)h$, and $1\pm\alpha\neq 0$ since $f$ and $g$ are linearly independent. Since $f$ and $g$ are even, we get that $h^*=h$ and then  $\chi^*=\chi$. This occurs in case (7).\\
\underline{Subcase B:} $g^{\circ}=0$ and $f^{\circ}\neq 0$. In this case we deduce from \eqref{L1} and \eqref{L2} respectively that $f^e(x)f^{\circ}(yz)=0$ and $f^e(yz)=0$ for all $x,y,z\in S$.\\
\underline{Subcase B.1:} $f^{\circ}=0$ on $S^2$. That is $f=0$ on $S^2$ since $f^e=0$ on $S^2$. The functional equation \eqref{Hen} can be written as follows 
\[g(x\sigma(y))=g(x)g(y)-f(x)f(y),\ \text{for all}\  x,y\in S.\]
Since $f\neq 0$ ($f^{\circ}\neq 0$) we get according to Lemma \ref{lemme}  that $f$ and $g$ are linearly dependent. This is a contradiction. This case does not occur.\\
\underline{Subcase B.2:} $f^{\circ}\neq 0$ on $S^2$. This implies that $f^e=0$, so if we replace $y$ by $\sigma(y)$, the functional equation \eqref{Hen} can be written as follows :
\[g(xy)=g(x)g(y)+f(x)f(y)+\alpha f(xy),\quad x,y\in S.\]
This means that the pair $(g,if)$ satisfies \eqref{stet}, where $\alpha$ is replaced by $-i\alpha$. According to \cite[Theorem 3.1]{ST2} and taking into account that $g$ and $f$ are linearly independent and $g\neq 0$ we have the following possibilities:\\
(i)  $if=-i\alpha \dfrac{\chi_1+\chi_2}{2}+q\dfrac{\chi_1-\chi_2}{2}$ and $g=\dfrac{\chi_1+\chi_2}{2}\pm \sqrt{1+q^2+\alpha^2}\dfrac{\chi_1-\chi_2}{2}$, where $\chi_1,\chi_2:S\rightarrow\mathbb{C}$ are two different non-zero multiplicative functions and $q\in \mathbb{C}\backslash \lbrace \pm i\alpha\rbrace$ is a constant. So
\[f=\left(\dfrac{-\alpha-iq}{2} \right)\chi_1+\left(\dfrac{iq-\alpha}{2} \right)\chi_2,\]
and 
\[g=\left(\dfrac{1+\sqrt{1+q^2+\alpha^2}}{2} \right)\chi_1+\left(\dfrac{1-\sqrt{1+q^2+\alpha^2}}{2} \right)\chi_2,\]
or 
\[g=\left(\dfrac{1-\sqrt{1+q^2+\alpha^2}}{2} \right)\chi_1+\left(\dfrac{1+\sqrt{1+q^2+\alpha^2}}{2} \right)\chi_2.\]
Since $q\neq \pm i\alpha$, we see that $\alpha \neq \pm iq$. Then since $f$ is odd and $g$ is even we get  according to Lemma \ref{prob} (2) that 
\[\dfrac{-\alpha-iq}{2} +\dfrac{iq-\alpha}{2}=0\]
and 
\[\dfrac{1-\sqrt{1+q^2+\alpha^2}}{2}=\dfrac{1+\sqrt{1+q^2+\alpha^2}}{2}.\]
Then $\alpha=0$ and $\sqrt{1+q^2+\alpha^2}=0$. This implies that $iq=\pm 1$, so 
\[f=\pm\left(\dfrac{\chi_2-\chi_1}{2} \right),\]
and 
\[g=\pm \left(\dfrac{\chi_2-\chi_1}{2}\right).\]
 This contradicts the fact that $g$ and $f$ are linearly independent. This case does not occur.\\
(ii) $\alpha \neq 0$,  $if=-i\alpha \chi_1$ and $g=\chi_2$, where $\chi_1,\chi_2:S\rightarrow\mathbb{C}$ are two different non-zero multiplicative functions. Then $f=-\alpha \chi_1$. In addition since $f$ is odd and $g$ is even, we deduce that $\chi_1^*=-\chi_1$ and $\chi_2^*=\chi_2$. This implies that $\chi_1=0$. This case does not occur.\\
(iii) $if=-i\alpha \chi+h$ and $g=\chi\pm h$, where $\chi,h:S\rightarrow\mathbb{C}$ is a solution of \eqref{h}. Then $f=-\alpha \chi-ih$. If $g=\chi+h$, we get that 
$$f+ig=(i-\alpha)\chi,$$ 
and then since $f$ is odd and $g$ is even 
$$ig-f=(i-\alpha)\chi^*.$$
By adding the last two identities we obtain $g=(i-\alpha )\dfrac{\chi+\chi^*}{2i}$, and then $f=(i-\alpha )\dfrac{\chi-\chi^*}{2}$. The fact the $f\neq 0$ implies that $\chi\neq \chi^*$ and $\alpha \neq i$. Substituting the forms of $f$ and $g$ in \eqref{Hen} we get after reduction that
\[\chi(x)\left[(1-i\alpha )\chi^*(y)-(1+i\alpha)\chi(y) \right]+(1+i\alpha)\chi^*(x)\left[\chi(y)-\chi^*(y) \right] =0, \]
for all $x,y\in S$. Then since $\chi\neq \chi^*$ we deduce that $1+i\alpha=0$. That is $\alpha =i$, so this case does not occur since $\alpha \neq i$. For the case $g=\chi-h$ we proceed by the same way to get that $g=(i+\alpha )\dfrac{\chi+\chi^*}{2i}$ and  $f=(i+\alpha )\dfrac{\chi-\chi^*}{2}$ with $\chi\neq \chi^*$ and $\alpha \neq -i$. Similarly to the previous case we get by substitution that $\alpha =-i$. This case does not occur.
\\
\underline{Subcase C :} $g^{\circ} \neq 0$ and $f^{\circ} =0$. We deduce from \eqref{L1} and \eqref{L2} respectively that $g^e(x)g^{\circ}(yz)=0$ and $g^e(yz)=0$ for all $x,y,z\in S$.\\
\underline{Subcase C.1 :} $g^{\circ} = 0$ on $S^2$. Then $g=0$ on $S^2$ since $g^e=0$ on $S^2$, so the functional equation \eqref{Hen} becomes 
\[0=g(x)g(y)-f(x)f(y)+\alpha f(x\sigma(y)),\quad x,y\in S.\]
That is 
\[\alpha f(x\sigma(y))=f(x)f(y)-g(x)g(y) ,\quad x,y\in S.\]
Since $f$ and $g$ are linearly independent, we see that $\alpha \neq 0$, so for all $x,y\in S$

\[f(x\sigma(y))=\dfrac{1}{\alpha} f(x)f(y)-\dfrac{1}{\alpha} g(x)g(y).\]
Since $g\neq 0$ ($g^{\circ}\neq 0$)  we get according to Lemma \ref{lemme}  that $f$ and $g$ are linearly dependent. This is a contradiction. This case does not occur.\\
\underline{Subcase C.2 :} $g^{\circ} \neq 0$ on $S^2$. In this case we get that $g^e=0$, so if we replace $y$ by $\sigma(y)$ the functional equation \eqref{Hen} can be written as follows 
\[g(xy)=-g(x)g(y)-f(x)f(y)+\alpha f(xy),\quad x,y\in S.\]
This means that the pair $(-g,if)$ satisfies the functional equation \eqref{stet}, where $\alpha$ is replaced by $i\alpha$. According to \cite[Theorem 3.1]{ST2} and taking into account that $g$ and $f$ are linearly independent we have the following cases\\
(i) $if=i\alpha \dfrac{\chi_1+\chi_2}{2}+q\dfrac{\chi_1-\chi_2}{2}$ and $-g=\dfrac{\chi_1+\chi_2}{2}\pm \sqrt{1+q^2+\alpha^2}\dfrac{\chi_1-\chi_2}{2}$, where $\chi_1,\chi_2:S\rightarrow\mathbb{C}$ are two different non-zero multiplicative functions and $q\in \mathbb{C}\backslash \lbrace \pm i\alpha\rbrace$ is a constant. Then
\[f=\left(\dfrac{\alpha-iq}{2} \right)\chi_1+\left(\dfrac{\alpha+iq}{2} \right)\chi_2,\]
and 
\[g=\left(\dfrac{-1-\sqrt{1+q^2+\alpha^2}}{2} \right)\chi_1+\left(\dfrac{-1+\sqrt{1+q^2+\alpha^2}}{2} \right)\chi_2,\]
or 
\[g=\left(\dfrac{-1+\sqrt{1+q^2+\alpha^2}}{2} \right)\chi_1+\left(\dfrac{-1-\sqrt{1+q^2+\alpha^2}}{2} \right)\chi_2.\]
Since $q\neq \pm i\alpha$, we see that $\alpha \neq \pm iq$. Then since $f$ is even and $g$ is odd we get  according to Lemma \ref{prob} (1) that
$$\left\{ \begin{array}{l}
   \dfrac{\alpha-iq}{2}=\dfrac{\alpha+iq}{2} \\ 
  \dfrac{-1+\sqrt{1+q^2+\alpha^2}}{2}+\dfrac{-1-\sqrt{1+q^2+\alpha^2}}{2}=0 \\ 
\end{array}.\right. $$
 This case does not occur.\\
(ii) $\alpha \neq 0$,  $if=i\alpha \chi_1$ and $-g=\chi_2$, where $\chi_1,\chi_2:S\rightarrow\mathbb{C}$ are two different non-zero multiplicative functions. Then $f=\alpha \chi_1$ and $g=-\chi_2$. In addition since $g$ is odd and $f$ is even, we deduce that $\chi_1^*=\chi_1$ and $\chi_2^*=-\chi_2$. This means that $\chi_2=0$. This case does not occur.\\
(iii) $if=i\alpha \chi+h$ and $-g=\chi\pm h$, where $\chi,h:S\rightarrow\mathbb{C}$ is a solution of \eqref{h}. So $f=\alpha \chi-ih$ and $g=-\chi\pm h$. Suppose that $g=-\chi-h$, we get that
\[f-ig=(\alpha+i)\chi,\]
then since $f$ is even and $g$ is odd we deduce that 
\[f+ig=(\alpha+i)\chi^*.\]
By adding the last two identities, we get that
\[f=\dfrac{\alpha+i}{2}\left[\chi+\chi^* \right], \]
and then 
\[g=\dfrac{\alpha+i}{2i}\left[\chi^*-\chi \right]. \]
The fact that $g\neq 0$ implies that $\chi\neq \chi^*$ and $\alpha \neq -i$. So by substituting the forms of $g$ and $f$ in \eqref{Hen} we get that for all $x,y\in S$
\[\chi(x)\left[ (i\alpha-1)\chi(y)-(i\alpha+1)\chi^*(y)\right]+\chi^*(x)(1-i\alpha)\left[\chi(y)-\chi^*(y) \right] =0 .\]
Then since $\chi\neq \chi^*$, we deduce that $1-i\alpha=0$. That is $\alpha=-i$. This contradict the fact that $\alpha \neq -i$, so this case does not occur. Now if $g=-\chi+h$, we get by the same way that $f=\dfrac{\alpha-i}{2}\left[\chi+\chi^* \right]$ and $g=\dfrac{\alpha-i}{2i}\left[\chi-\chi^* \right]$, where $\chi\neq \chi^*$ and $\alpha\neq i$, and then by substitution that $\alpha =i$. This case does not occur.\\
\underline{Subcase D :} $g^{\circ}\neq 0$ and $f^{\circ}\neq 0$.\\
\underline{Subcase D.1 :} $g^{\circ}= 0$ on $S^2$ and $f^{\circ}= 0$ on $S^2$. That is $g=g^e$ on $S^2$ and $f=f^e$ on $S^2$, and from \eqref{L2} we deduce that $f^e=cg^e$ on $S^2$ for some constant $c\in \mathbb{C}$. That is $f=cg$ on $S^2$, so the functional equation \eqref{Hen} can be written as follows
\[(1-c\alpha)g(x\sigma(y))=g(x)g(y)-f(x)f(y).\]
Since $g$ and $f$ are linearly independent, we see that $\beta:=1-c\alpha\neq 0$. Then 
\[g(x\sigma(y))=\dfrac{1}{\beta}g(x)g(y)-\dfrac{1}{\beta}f(x)f(y).\]
This means that the pair $\left(\dfrac{1}{\beta}g,\dfrac{i}{\beta}f \right) $ satisfies the functional equation \eqref{AE}, so taking into account that $g$ and $f$ are linearly independent we get from \cite[Theorem 4.2 ((4) and (5))]{Ase} that $\dfrac{1}{\beta}g$ and $\dfrac{i}{\beta}f$ are even. This is a contradiction since $g^{\circ}\neq 0$ and $f^{\circ}\neq 0$. This case does not occur.\\
\underline{Subcase D.2 :} $g^{\circ}= 0$ on $S^2$ and $f^{\circ}\neq 0$ on $S^2$. We get from \eqref{L1} that $f^e=0$, then from \eqref{L2} that $g^e=0$ on $S^2$. That is $g=0$ on $S^2$. The functional equation \eqref{Hen} becomes
\[\alpha f(x\sigma(y))=f(x)f(y)-g(x)g(y).\]
Since $g$ and $f$ are linearly independent, we see that $\alpha \neq 0$, so
\[f(x\sigma(y))=\dfrac{1}{\alpha}f(x)f(y)-\dfrac{1}{\alpha}g(x)g(y).\]
Since $g\neq 0$ ($g^{\circ}\neq 0$)  we get according to Lemma \ref{lemme}  that $f$ and $g$ are linearly dependent. This is a contradiction. This case does not occur.\\
\underline{Subcase D.3 :} $g^{\circ}\neq 0$ on $S^2$ and $f^{\circ}= 0$ on $S^2$. We deduce from \eqref{L1} that $g^e=0$, and then by using \eqref{L2} we get that $f^e=0$ on $S^2$. This implies that $f=0$ on $S^2$, and \eqref{Hen} can be written as follows
\[g(x\sigma(y))=g(x)g(y)-f(x)f(y).\]
Since $f\neq 0$($f^{\circ}\neq 0$) we get according to Lemma \ref{lemme}  that $f$ and $g$ are linearly dependent. This case does not occur.\\
\underline{Subcase D.4 :} $g^{\circ}\neq 0$ on $S^2$ and $f^{\circ}\neq 0$ on $S^2$. By using \eqref{L1} we get that 
\begin{equation}
g^e=\delta f^e,
\label{g1}
\end{equation}
for some constant $\delta \in \mathbb{C}$.\\
\underline{Subcase D.4.1 :} $f^e= 0$. So $g^e=0$, and then if we replace $y$ by $\sigma(y)$ the functional equation \eqref{Hen} becomes
\[g(xy)=-g(x)g(y)+f(x)f(y)+\alpha f(xy).\]
This means that the pair $\left(-g,f \right) $ satisfies \eqref{stet}, where $\alpha$ is replaced by $-\alpha$. According to \cite[Theorem 3.1]{ST2} and taking into account that $g$ and $f$ are linearly independent we have the following cases\\
(i) $f=-\alpha \dfrac{\chi_1+\chi_2}{2}+q\dfrac{\chi_1-\chi_2}{2}$ and $-g=\dfrac{\chi_1+\chi_2}{2}\pm \sqrt{1+q^2-\alpha^2}\dfrac{\chi_1-\chi_2}{2}$, where $\chi_1,\chi_2:S\rightarrow\mathbb{C}$ are two different non-zero multiplicative functions and $q\in \mathbb{C}\backslash \lbrace \pm \alpha\rbrace$ is a constant. Then $g=-\dfrac{\chi_1+\chi_2}{2}\pm \sqrt{1+q^2-\alpha^2}\dfrac{\chi_1-\chi_2}{2}$.\\
So $f-\alpha g=\dfrac{q\pm \alpha\sqrt{1+q^2-\alpha^2}}{2}\left(\chi_1-\chi_2 \right) $, and $\dfrac{q\pm \alpha\sqrt{1+q^2-\alpha^2}}{2}\neq 0$ since $g$ and $f$ are linearly independent. The fact that $f$ and $g$ are odd implies that
\[\dfrac{q\pm \alpha\sqrt{1+q^2-\alpha^2}}{2}(\chi_1-\chi_2)=\dfrac{q\pm \alpha\sqrt{1+q^2-\alpha^2}}{2}(\chi_2^*-\chi_1^*).\]
This implies that 
\[\chi_1-\chi_2=\chi_2^*-\chi_1^*.\]
Then, since $g$ is odd we get  that 
\[\chi_1+\chi_2=-(\chi_1^*+\chi_2^*).\]
So we deduce from the last two identities that $\chi_1=-\chi_1^*$ and $\chi_2=-\chi_2^*$. This implies that $\chi_1=\chi_2=0$. This case does not occur.\\
(ii) $\alpha \neq 0$,  $f=-\alpha \chi_1$ and $-g=\chi_2$, where $\chi_1,\chi_2:S\rightarrow\mathbb{C}$ are two different non-zero multiplicative functions. Then $g=-\chi_2$. In addition since $g$  and $f$ are odd, we deduce that $\chi_1^*=-\chi_1$ and $\chi_2^*=-\chi_2$. That is $\chi_1=\chi_2=0$. This case does not occur.\\
(iii) $f=-\alpha \chi+h$ and $-g=\chi\pm h$, where $\chi,h:S\rightarrow\mathbb{C}$ is a solution of \eqref{h}. So $g=-\chi\pm h$. Then $f-\alpha g=(1\pm \alpha )h$ and $1\pm \alpha \neq 0$ since $g$ and $f$ are linearly independent. Since $g$ and $f$ are odd, we see that $h=-h^*$, and then $\chi=-\chi^*$. This implies that  $\chi=0$. This case does not occur.\\
\underline{Subcase D.4.2 :} $f^e\neq 0$. We get from \eqref{L1} that $f^{\circ}=bg^{\circ}$ on $S^2$ for some constant $b\in \mathbb{C}$ and $\delta \neq 0$.\\
\underline{Subcase D.4.2.1 :} $f^e= 0$ on $S^2$. So $g^e=0$ on $S^2$. That is $f=bg$ on $S^2$ since $f^e=g^e=0$ on $S^2$. The functional equation \eqref{Hen} can be written as
\[(1-b\alpha)g(x\sigma(y))=g(x)g(y)-f(x)f(y).\]
The fact that $g$ and $f$ are linearly independent implies that $\gamma :=1-b\alpha \neq 0$, so we get
\[g(x\sigma(y))=\dfrac{1}{\gamma} g(x)g(y)-\dfrac{1}{\gamma} f(x)f(y).\]
This means that the pair $\left(\dfrac{1}{\gamma} g,\dfrac{i}{\gamma}f\right) $ satisfies \eqref{AE}, then since $f$ and $g$ are linearly independent we get from \cite[Theorem 4.2 ((4) and (5))]{Ase} that $\dfrac{1}{\gamma} g$ and $\dfrac{i}{\gamma} f$ are even. This contradicts the fact that $g^{\circ }\neq 0$ and $f^{\circ}\neq 0$. This case does not occur.\\
\underline{Subcase D.4.2.2 :} $f^e\neq 0$ on $S^2$. In this case we get from \eqref{L2} that $f^{\circ}=ag^{\circ}$ for some constant $a\in \mathbb{C}$, and $a\neq 0$ since $f^{\circ}\neq 0$. So
\[g^{\circ}=\dfrac{1}{a}f^{\circ}.\]
So with \eqref{g1}, equation \eqref{L1} becomes 
\[\dfrac{\delta}{a}f^e(x)f^{\circ}(yz)=f^e(x)f^{\circ}(yz),\]
for all $x,y,z\in S$. This implies that $\dfrac{\delta}{a}=1$, and then $\delta=a$, so
we get that 
\begin{equation}
g^{\circ}=\dfrac{1}{\delta}f^{\circ}.
\label{g2}
\end{equation}
By adding \eqref{g2} to \eqref{g1} we get that 
\begin{equation}
g=\delta f^e+\dfrac{1}{\delta}f^{\circ}.
\label{g3}
\end{equation}
Since $f$ and $g$ are linearly independent, we see from \eqref{g3} that $\delta \neq \pm 1$. So in view of \eqref{g3} the functional equation \eqref{Hen} becomes 
\begin{equation}
(\delta-\alpha)f^e(x\sigma(y))+\left(\dfrac{1}{\delta}-\alpha \right)f^{\circ}(x\sigma(y))=g(x)g(y)-f(x)f(y).
\label{s1} 
\end{equation}
If we apply \eqref{s1} first to the pair $(x,\sigma(y))$ and then to the pair $(\sigma(x),y)$  we get respectively the two following identities
\begin{equation}
(\delta-\alpha)f^e(xy)+\left(\dfrac{1}{\delta}-\alpha \right)f^{\circ}(xy)=g(x)g^*(y)-f(x)f^*(y).
\label{s2}
\end{equation}
\begin{equation}
(\delta-\alpha)f^e(xy)-\left(\dfrac{1}{\delta}-\alpha \right)f^{\circ}(xy)=g^*(x)g(y)-f^*(x)f(y).
\label{s3}
\end{equation}
By adding \eqref{s2} to \eqref{s3} and taking into account that $k^*=k^e-k^{\circ}$ and $k=k^e+k^{\circ}$ for any function $k:S\rightarrow\mathbb{C}$ we get
\begin{equation}
(\delta-\alpha)f^e(xy)=g^e(x)g^e(y)-g^{\circ}(x)g^{\circ}(y)-f^e(x)f^e(y)+f^{\circ}(x)f^{\circ}(y).
\label{s4}
\end{equation}
Now by using \eqref{g1} and \eqref{g2}, the identitiy \eqref{s4} becomes 
\begin{equation}
(\delta-\alpha)f^e(xy)=(\delta^2-1)f^e(x)f^e(y)+\left(1-\dfrac{1}{\delta^2} \right)f^{\circ}(x)f^{\circ}(y).
\label{s5} 
\end{equation}
Since $\delta\neq\pm 1$ and $f^e\neq 0$ on $S^2$, we see from \eqref{s5} that $\alpha \neq \delta$, and if we apply \eqref{s5} to the pair $(\sigma(y),x)$ we get 
\begin{equation}
(\delta-\alpha)f^e(\sigma(y)x)=(\delta^2-1)f^e(x)f^e(y)-\left(1-\dfrac{1}{\delta^2} \right)f^{\circ}(x)f^{\circ}(y).
\label{s6} 
\end{equation}
By adding \eqref{s6} to \eqref{s5} we get since $\alpha \neq \delta$
\begin{equation}
f^e(xy)+f^e(\sigma(y)x)=\dfrac{2(\delta^2-1)}{\delta-\alpha}f^e(x)f^e(y).
\label{s7}
\end{equation}
According to \cite[Theorem 2.1]{STd} we deduce from \eqref{s7} that 
\begin{equation}
f^e=\dfrac{\delta-\alpha}{2(\delta^2-1)} \left( \chi+\chi^*\right) ,
\label{s8}
\end{equation}
where $\chi:S\rightarrow\mathbb{C}$ is non-zero multiplicative since $f^e\neq 0$. Substituting \eqref{s8} in \eqref{s5} and taking into account that $f^{\circ}\neq 0$ we obtain 
\begin{equation}
f^{\circ}=\dfrac{\delta(\delta-\alpha)}{2(\delta^2-1)} \left( \chi-\chi^*\right),
\label{s9}
\end{equation}
where $\chi\neq \chi^*$, so we deduce from \eqref{s8} and \eqref{s9} that 
\begin{equation}
f=\dfrac{\delta-\alpha}{2(\delta^2-1)} \left( \chi+\chi^*\right)+\dfrac{\delta(\delta-\alpha)}{2(\delta^2-1)} \left( \chi-\chi^*\right),
\label{s10}
\end{equation}
and then by using \eqref{g1} and \eqref{g2} that 
\begin{equation}
g=\dfrac{\delta(\delta-\alpha)}{2(\delta^2-1)} \left( \chi+\chi^*\right)+\dfrac{\delta-\alpha}{2(\delta^2-1)} \left( \chi-\chi^*\right).
\label{s11}
\end{equation}
By substituting \eqref{s10} and \eqref{s11} in \eqref{Hen} we deduce after reduction that $\delta \alpha=1$. That is $\delta =\dfrac{1}{\alpha}$, and then we get that 
\[f=\dfrac{1+\alpha}{2}\chi-\dfrac{1-\alpha}{2}\chi^*\ \ \text{and}\ \ g=\dfrac{1+\alpha}{2}\chi+\dfrac{1-\alpha}{2}\chi^*,\]
where $\alpha \neq \pm 1$. This occurs in case (8).\par 
Finally, if $S$ is a topological semigroup, the continuity statements follows directly from \cite[Proposition 5.1]{ST2}. This completes the proof of Theorem \ref{thm1}.
\end{proof}
All of the following follows directly from Theorem \ref{thm1} and \cite[Theorem 3.1 (B)]{EB2}.
\begin{cor}
The solutions $g,f:S\rightarrow\mathbb{C}$ of the functional equation \eqref{Hen} are the following families:
\begin{enumerate}
\item[(1)] $\alpha = \pm 1$, $f$ is any non-zero function and $g=\alpha f$.
\item[(2)]$\alpha \neq 1$, $f=g\neq 0$ and $g=0$ on $S^2$.
\item[(3)] $\alpha \neq -1$, $f=-g\neq 0$ and $g=0$ on $S^2$.
\item[(4)] $f=(q+\alpha)\dfrac{\chi}{2}$ and $g=\left(1\pm \sqrt{1+q^2-\alpha ^2} \right)\dfrac{\chi}{2} $, where $q\in \mathbb{C}$ is a constant and $\chi :S\rightarrow \mathbb{C}$ a non-zero even multiplicative function.
\item[(5)] $f=\alpha \dfrac{\chi_1+\chi_2}{2}+q\dfrac{\chi_1-\chi_2}{2}$ and $g=\dfrac{\chi_1+\chi_2}{2}\pm \sqrt{1+q^2-\alpha^2}\dfrac{\chi_1-\chi_2}{2}$, where $\chi_1,\chi_2:S\rightarrow\mathbb{C}$ are two different non-zero even multiplicative functions and $q\in \mathbb{C}\backslash \lbrace \pm \alpha\rbrace$ is a constant.
\item[(6)] $\alpha \neq 0$,  $f=\alpha \chi_1$ and $g=\chi_2$, where $\chi_1,\chi_2:S\rightarrow\mathbb{C}$ are two different non-zero even multiplicative functions.
\item[(7)] $f=\alpha \chi+h$ and $g=\chi\pm h$ such that 
$$h=\left\{ \begin{matrix}
   \chi A & on & S\backslash {{I}_{\chi }}  \\
   0 & on & {{I}_{\chi }}\backslash {{P}_{\chi }}  \\
   \rho  & on & {{P}_{\chi }}  \\
\end{matrix}\ , \right.$$ 
where $\chi:S\rightarrow\mathbb{C}$ is an even non-zero multiplicative function, $A:S\backslash I_{\chi}\rightarrow\mathbb{C}$ an additive function such that $A\circ \sigma=A$, and $\rho:P_\chi\rightarrow\mathbb{C}$ a function such that $\rho\circ\sigma=\rho$. In addition we have the following conditions.\\
(I): If $x\in \lbrace up,pv,upv\rbrace$ for $p\in P_{\chi}$ and $u,v\in S\backslash I_{\chi}$, then we have respectively $\rho(x)=\rho(p)\chi(u)$, $\rho(x)=\rho(p)\chi(v)$, or $\rho(x)=\rho(p)\chi(uv)$.\\
(II): $h(xy)=h(yx)=0$ for all $x\in I_{\chi}\backslash P_{\chi}$ and $y\in S\backslash I_{\chi}$.
\item[(8)] $\alpha\neq \pm 1$, $f=\dfrac{1+\alpha}{2}\chi-\dfrac{1-\alpha}{2}\chi^*$ and $g=\dfrac{1+\alpha}{2}\chi+\dfrac{1-\alpha}{2}\chi^*$, where $\chi:S\rightarrow\mathbb{C}$ is a multiplicative function such that $\chi\neq \chi^*$.\\
Note that, off the exceptional case (1), $g$ and $f$ are abelian.\par 
Furthermore, if $S$ is a topological semigroup,  $f\in C(S)$ such that $f\neq \alpha \chi$ for any multiplicative function $\chi\in C(S)$, then $g\in C(S)$.
\end{enumerate}
\label{cor}
\end{cor}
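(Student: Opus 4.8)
The plan is to notice that the Corollary coincides with Theorem \ref{thm1} in families (1)--(6) and (8), which are reproduced verbatim; hence the only thing to prove is the explicit description of the function $h$ appearing in case (7). Recall that there $h:S\to\mathbb{C}$ is an \emph{even} solution of the special sine addition law \eqref{h} attached to a non-zero even multiplicative function $\chi$, with $f=\alpha\chi+h$ and $g=\chi\pm h$. Everything reduces to identifying the even solutions of \eqref{h}.

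First I would drop the evenness constraint and simply quote Ebanks \cite[Theorem 3.1 (B)]{EB2}, which lists every solution $h$ of \eqref{h} on an arbitrary semigroup for a fixed non-zero multiplicative $\chi$. This produces exactly the stated three-piece shape: $h=\chi A$ on $S\setminus I_\chi$, $h=0$ on $I_\chi\setminus P_\chi$, and $h=\rho$ on $P_\chi$, where $A$ is additive on $S\setminus I_\chi$, $\rho$ is a function on $P_\chi$, and the two compatibility requirements (I) and (II) hold. Conditions (I) and (II) belong to Ebanks's statement and are transcribed without change.

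The only genuine work is to translate the evenness $h=h^{*}$ into the two extra hypotheses $A\circ\sigma=A$ and $\rho\circ\sigma=\rho$. Because $\chi$ is even we have $\chi\circ\sigma=\chi$, so $x\in I_\chi\iff\sigma(x)\in I_\chi$; consequently $\sigma$ preserves both $S\setminus I_\chi$ and $I_\chi$, and by part (b) of the lemma on $P_\chi$ (Section 2), applied with $\chi=\chi^{*}$, it also preserves $P_\chi$. Hence $\sigma$ respects the partition $\{S\setminus I_\chi,\,I_\chi\setminus P_\chi,\,P_\chi\}$ and $h=h^{*}$ can be tested piecewise. On $I_\chi\setminus P_\chi$ it holds trivially since $h=0$ there; on $P_\chi$ it reads $\rho(\sigma(x))=\rho(x)$; and on $S\setminus I_\chi$, using $\chi(\sigma(x))=\chi(x)$, we get $h^{*}(x)=\chi(x)A(\sigma(x))$, so $h=h^{*}$ is equivalent to $A(\sigma(x))=A(x)$. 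The converse implications are immediate, which pins down case (7) exactly as stated; the continuity addendum is inherited verbatim from Theorem \ref{thm1}.

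I do not anticipate a serious obstacle. The one point demanding care is checking that $\sigma$ leaves the three defining pieces invariant, and this is precisely where evenness of $\chi$ together with part (b) of the $P_\chi$-lemma enters; the remainder is a direct reading of Ebanks's classification combined with the already-established Theorem \ref{thm1}.
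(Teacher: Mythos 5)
Your proposal is correct and follows essentially the same route as the paper: the paper simply states that the Corollary ``follows directly from Theorem \ref{thm1} and \cite[Theorem 3.1 (B)]{EB2}'', i.e.\ substitute Ebanks's explicit description of the solutions of \eqref{h} into case (7) of Theorem \ref{thm1}. Your extra step of translating $h=h^{*}$ into $A\circ\sigma=A$ and $\rho\circ\sigma=\rho$ via the invariance of $S\setminus I_\chi$, $I_\chi\setminus P_\chi$ and $P_\chi$ under $\sigma$ (using part (b) of the lemma on $P_\chi$) is exactly the detail the paper leaves implicit, and it is carried out correctly.
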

\section{Examples}
In this section we give some examples of solutions of the functional equation \eqref{Hen}.
\begin{ex}
Let $S=(\mathbb{R},+)$ under the usual topology, and let $\sigma :\mathbb{R}\rightarrow\mathbb{R}$ be the involution defined by $\sigma(x)=-x$ for all $x\in \mathbb{R}$. The functional equation \eqref{Hen} can be written as
\begin{equation}
g(x-y)=g(x)g(y)-f(x)f(y)+\alpha f(x-y),
\label{e1}
\end{equation}
where $f,g:\mathbb{R}\rightarrow\mathbb{C}$. We determine the continuous solutions of \eqref{e1} with $\alpha\neq 0$. The case $\alpha=0$ is \cite[Example 4.18]{ST1} where the function $f$ is replaced by $if$. The continuous non-zero multiplicative functions on $S$ are the functions
\[\chi(x)=e^{i\lambda x},\quad x\in \mathbb{R},\]
where $\lambda \in \mathbb{C}$. In addition if $\chi$ is even then $\lambda=0$. The continuous additive functions on $S$ are of the form $a(x)=\delta x$ for all $x\in \mathbb{R}$ and some constant $\delta\in \mathbb{C}$. Such functions are even if and only if $\delta=0$. The solutions $f,g\in C(S)$ of \eqref{e1} are the following:
\begin{enumerate}
\item[(a)] $f=g=0$.
\item[(b)] $\alpha =\pm 1$, $f$ is any non-zero continuous function on $S$ and $g=\alpha f$.
\item[(c)] $f=\dfrac{q+\alpha}{2}$ and $g=\dfrac{1\pm \sqrt{1+q^2-\alpha^2}}{2}$, where $q\in \mathbb{C}$ is a constant.
\item[(d)] $f(x)=\alpha\cos(\lambda x)+i\sin(\lambda x) $ and $g(x)=\cos (\lambda x)+i\alpha\sin (\lambda x)$, where $\lambda \in \mathbb{C}\backslash \lbrace 0\rbrace$.
\end{enumerate}
\end{ex}
\begin{ex}
Let $S=H_3$ be the Heisenberg group defined by
\[H_3=\left\lbrace \left( \begin{matrix}
   1 & x & z  \\
   0 & 1 & y  \\
   0 & 0 & 1  \\
\end{matrix} \right)\mid \quad x,y,z\in \mathbb{R} \right\rbrace , \]
and let 
\[X=\left( \begin{matrix}
   1 & x & z  \\
   0 & 1 & y  \\
   0 & 0 & 1  \\
\end{matrix} \right),\]
for all $x,y,z\in \mathbb{R}$. We consider the following involution 
\[\sigma \left( X \right)=\left( \begin{matrix}
   1 & -x & z  \\
   0 & 1 & -y  \\
   0 & 0 & 1  \\
\end{matrix} \right).\]
According to \cite[Example 2.11, Example 3.14]{ST1}, the continuous  additive and the non-zero multiplicative functions on $S$ have respectively the forms
\[A \left( X \right)=\alpha x+\beta y ,\]
and 
\[\chi \left( X \right)=e^{ax+by},\]
where $\alpha ,\beta ,a ,b\in\mathbb{C}$. So $A\circ \sigma=A$ if and only $A=0$. On the other hand $\chi^*=\chi$ implies that $\chi =1$. So the continuous  solutions of equation \eqref{Hen} are the following four type: 
\begin{enumerate}
\item[(1)] $f=g=0$.
\item[(2)] $\alpha=\pm 1$, $f$ is any non-zero continuous function on $S$ and $g=\alpha f$.
\item[(3)] $f(X)=\dfrac{q+\alpha}{2}$ and $g(X)=\dfrac{1\pm \sqrt{1+q^2-\alpha^2}}{2}$, where $q\in \mathbb{C}$.
\item[(4)] $\alpha \neq \pm 1$, and
$$\left\{ \begin{matrix}
     f(X)=\dfrac{1+\alpha}{2}e^{ax+by}-\dfrac{1-\alpha}{2}e^{-ax-by} \\
   g(X)=\dfrac{1+\alpha}{2}e^{ax+by}+\dfrac{1-\alpha}{2}e^{-ax-by}  \\
\end{matrix}\quad , \right.$$
where $a,b\in \mathbb{C}$ are constants such that $(a,b)\neq (0,0)$.
\end{enumerate}
\end{ex}
In the following example we shall apply our theory to a semigroup $S$ such that $S^2\neq S$. 
\begin{ex}
Let $S=(\mathbb{N}\backslash \lbrace 1\rbrace,.)$, and let $\chi:S\rightarrow\mathbb{C}$ be the multiplicative function defined by 
$$\chi (x):=\left\{ \begin{matrix}
   1 & for & x\in \mathbb{N}\backslash \left( 2\mathbb{N}\cup \lbrace 1\rbrace\right)   \\
   0 & for & x\in 2\mathbb{N}  \\
\end{matrix}. \right.$$
Then $I_{\chi}=2\mathbb{N}$ and $P_{\chi}=2\mathbb{N}\backslash 4\mathbb{N}$. We let $\sigma:=id$ be the identity function. So $\chi$ is even with respect to $\sigma$. An additive function $A:\mathbb{N}\backslash \left( 2\mathbb{N}\cup \lbrace 1\rbrace\right)  \rightarrow\mathbb{C}$ is defined by
\[A(x):=\text{the number of times 5 occurs in the prime factorization of x}.\]
By Corollary \ref{cor} (7), the form of the function $h:\mathbb{N}\backslash \lbrace 1\rbrace\rightarrow\mathbb{C}$ is 
$$h(x):=\left\{ \begin{matrix}
   A(x) & for & x\in \mathbb{N}\backslash \left( 2\mathbb{N}\cup \left\{ 1 \right\} \right)  \\
   0 & for & x\in 4\mathbb{N}  \\
   \rho  & for & x\in 2\mathbb{N}\backslash 4\mathbb{N}  \\
\end{matrix} ,\right.$$
where $\rho:2\mathbb{N}\backslash 4\mathbb{N}\rightarrow \mathbb{C}$ is a function satisfying the condition (I). So
\[h(2(2n+1))=\rho(2)\chi(2n+1)=\rho (2)\quad\text{for all}\ n\in \mathbb{N}.\]
This implies that $\rho(x)=c$ for all $x\in 2\mathbb{N}\backslash 4\mathbb{N}$, where $c:=\rho(2)\in \mathbb{C}$. That is 
$$h(x)=\left\{ \begin{matrix}
   A(x) & for & x\in \mathbb{N}\backslash \left( 2\mathbb{N}\cup \left\{ 1 \right\} \right)  \\
   0 & for & x\in 4\mathbb{N}  \\
   c  & for & x\in 2\mathbb{N}\backslash 4\mathbb{N}  \\
\end{matrix} .\right.$$
So the condition (II) is satisfied. We get the solutions $f,g:S\rightarrow\mathbb{C}$ of equation \eqref{Hen} by plugging the appropriate forms above into the formulas of Corollary \ref{cor}.\par 
For the semigroup $S$ we have that $S^2=S\backslash \mathbb{P}$, where $\mathbb{P}$ denote the set of all prime numbers, and also $S$ is neither monoid nor generated by its squares, so we can see that we can not avoid the cases (2), (3) and (7) from Corollary \ref{cor}. 

\end{ex}
\subsection*{Declarations}

\textbf{Author contributions} This work is done  by the authors solely.\\
\\
\textbf{Funding} None.\\
\\
\textbf{Availability of data and materials} Not applicable.\\
\\
\textbf{Code Availability} Not Applicable.\\
\\
\textbf{Conflict of interest} None.

\end{document}